\newcommand{\E}{\mathcal E}
\newcommand{\str}[1]{\mathcal{S}^{\pa{#1}}}
\newcommand{\Lag}[1]{L^{\pa{#1}}}
\newcommand{\LL}{\mathfrak L}
\newcommand{\qbinom}[2]{\binom{#1}{#2}_{\mkern-7mu q}}
\DeclareMathOperator{\G}{G}
\DeclareMathOperator{\itlog}{itlog}
\title{On formulas and fractional exponents for umbral operators}
\author{Kei Beauduin}
\date{}
\begin{document}

\maketitle

\begin{abstract}
    We present a new formula for umbral operators that yields three main insights. First, it makes explicit a connection between umbral calculus and iteration theory. Second, it leads naturally to a definition of fractional exponents of umbral operators. Third, its proof synthesizes a broad range of existing results in operational calculus and highlights their combined effectiveness. As an illustration, we obtain a new and natural extension of the Laguerre polynomials.
\end{abstract}

\section{Introduction}

Previous work on \emph{Laguerre polynomials} \cite{dattoli1998,dattoli1999,dattoli2000,dattoli2008} led to an explicit expression for the \emph{Laguerre umbral operator} $L$, namely the linear map sending the monomials $\{x^n\}_{n\ge0}$ to the Laguerre polynomials $\{L_n(x)\}_{n\ge0}$. More precisely, this research shows that $L = e^{-\x D^2}$, where $\x$ denotes multiplication by $x$ and $D$ differentiation with respect to $x$.

This observation motivates the present study: can a general umbral operator $\phi$ be expressed in the form
\begin{equation}\label{e:exVintro}
    \phi = e^{\x V(D)} = 1 + \x V(D) + \inv2 (\x V(D))^2 + \ldots,
\end{equation}
where $V$ is a formal power series? We answer this question completely, and our proof synthesizes and reveals the strength of two centuries of developments in operational calculus.

In \Cref{s:opcalc} we introduce the notation and preliminary facts from operational calculus that we will need, and in \Cref{s:known} we review several known expressions for $\phi$. In \Cref{s:proof} we assemble a number of existing results to derive our formula for $V$. In particular, we recall the minimal amount of \emph{iteration theory} needed to express $V$ in terms of the \emph{iterative logarithm}. The connection between umbral calculus and iteration theory appears to be largely unexplored; to the best of our knowledge, it has been made explicit only in the work of Labelle \cite{labelle1980}.

In \Cref{s:converse} we prove the converse statement: given $V$, we show that \cref{e:exVintro} indeed defines an umbral operator. Then, in \Cref{s:frac}, we define and study fractional exponents of umbral operators. Finally, in \Cref{s:laguerre}, we illustrate the theory with a new family of generalized Laguerre polynomials.

\section{Operational calculus preliminaries}\label{s:opcalc}

Let $\Hom(V, W)$ denote the set of $\C$-linear maps between two vector spaces $V$ and $W$, and let $\End(V) := \Hom(V, V)$ denote the set of endomorphisms of $V$. Let $\C[x]$ and $\C\bbra{x}$ denote, respectively, the sets of polynomials and formal power series in the variable $x$. Any element of $\Hom(\C[x], \C\bbra{x})$ will be called an \emph{operator}.

\subsection{Pincherle derivative}

The \emph{Pincherle derivative} $U'$ \cite{pincherle1895,rota1973,beauduin2024} of an operator $U$ is defined by
\begin{equation}\label{e:Pderiv}
    U' := U \x - \x U.
\end{equation}
The Pincherle derivative is $\C\bbra{\x}$-linear and, for any power series $g$, satisfies $g(D)' = g'(D)$. In other words, for shift-invariant operators, it behaves like the usual derivative, but with respect to $D$. \Cref{e:Pderiv} extends to the $n$-th Pincherle derivative as follows \cite{pincherle1895}:
\begin{equation}\label{e:U(n)}
    U^{(n)} = \sum_{k=0}^n \binom{n}{k} (-\x)^{n-k} U \x^k,
\end{equation}
This identity is readily proved by induction and can be inverted to give
\begin{equation}\label{e:Uxn}
    U \x^n= \sum_{k=0}^n \binom{n}{k} \x^{n-k} U^{(k)}.
\end{equation}

\subsection{Generalized exponentiation}

We will use the generalized exponentiation of operators $U$ and $V$, defined by
\begin{equation}\label{e:U^V}
    U^V := \sum_{k=0}^\infty (U-1)^k \binom{V}{k},
\end{equation}
where $\binom{x}{k} := \frac{(x)_k}{k!}$ denotes the \emph{generalized binomial coefficient}, and $(x)_k := x(x-1) \cdots (x-k+1)$ is the \emph{falling factorial}. It is important to note that, in general,
\begin{equation}\label{e:U^Vrev}
    U^V \neq \sum_{k=0}^\infty \binom{V}{k} (U-1)^k,
\end{equation}
since operators need not commute.\footnote{The right-hand sides of \cref{e:U^Vrev} and \cref{e:U^V} coincide, respectively, with the right- and left-hand power operations introduced by Hines in \cite{hines1955}. The present definition was introduced by the present author in \cite{beauduin2025}.} In \cite[Prop.~2.2]{beauduin2025}, the following identity was established:
\begin{equation}\label{e:eUV}
    \sum_{k=0}^\infty \inv{k!} U^k V^k = (e^U)^V.
\end{equation}
Again, $(e^U)^V = e^{UV}$ holds only when $U$ and $V$ commute. This identity lets us express, in compact form, a type of series that appears frequently in operational calculus.

\subsection{Generating function of an operator}

To any operator $U$, we associate the sequence of polynomials $(U_n(x))_{n\ge0}$ defined by $U_n(x) := U x^n$. Its generating function $\G_U$ is
\begin{equation}\label{e:G}
    \G_U(x, t) := \sum_{n=0}^\infty \frac{U_n(x)}{n!} t^n.
\end{equation}
A sequence of polynomials $(p_n(x))_{n\ge0}$ such that $\deg p_n(x)=n$ for every nonnegative integer $n$ is called a \emph{polynomial sequence}, and is denoted by $\{p_n(x)\}_{n\ge0}$. Any such sequence determines an invertible operator $p$ satisfying $p x^n = p_n(x)$.

\subsection{Umbral calculus}

For the rest of this paper, we let $f$ be a compositionally invertible formal power series, meaning $f(0) = 0$ and $f'(0) \neq 0$. It can be expressed as
\begin{equation}
    f(t) = \sum_{n=1}^\infty \frac{t^n}{n!} a_n, \quad a_1 \neq 0.
\end{equation}
Such a series defines a sequence of polynomials $(\phi_n(x))_{n\ge0}$ through the generating function
\begin{equation}\label{e:genphi}
    e^{xf(t)} = \sum_{n=0}^\infty \frac{t^n}{n!} \phi_n(x).
\end{equation}
This sequence is characterized by the following properties: $\{\phi_n(x)\}_{n\ge0}$ is a polynomial sequence, $\phi_0(x) = 1$, $\phi_n(0) = 0$ for $n>0$, and there exists a \emph{delta operator} $Q$ such that, for all $n>0$,
\[
Q \phi_n(x) = n \phi_{n-1}(x).
\]
The operator $Q$ is unique and satisfies
\begin{equation}
    Q = f^{-1}(D).
\end{equation}
The associated operator $\phi$ is called an \emph{umbral operator}. Since it satisfies $\G_\phi(x, t) = e^{xf(t)}$, we say that $\phi$ is \emph{generated} by $f$.

\section{Known expressions for umbral operators}\label{s:known}

Finding closed forms for umbral operators is challenging. Aside from the identity, such operators are not shift-invariant, so the expansion theorem \cite{rota1973,beauduin2024} does not apply directly. Nevertheless, several noteworthy explicit formulas do appear in the literature, and we collect them in this section.

\subsection{A formula of Garsia and Joni}

The first formula ever published is due to Garsia and Joni \cite{garsia1977}:
\begin{formula}\label{f:garsia}
    \begin{equation}\label{e:garsia}
        \phi = \sum_{k=0}^\infty \frac{\x^k}{k!} \E f(D)^k,
    \end{equation}
\end{formula}
where $\E f(x) := f(0)$ is the operator of evaluation at the origin. This result may be viewed as an extension of Taylor expansion in a direction different from that of the expansion theorem \cite{rota1973,beauduin2024}.
\begin{proof}
    Let $\psi := \phi^{-1}$. According to \cite[Thm.~3.3]{beauduin2024}, $\psi$ is the umbral operator associated with the delta operator $f(D)$. Expanding $E^y$ into powers of $f(D)$ via the expansion theorem \cite[Thm.~3.1]{beauduin2024}, and then applying the result to a polynomial $p(x)$, yields
    \begin{equation}\label{e:expansion}
        p(x+y) = \sum_{n=0}^\infty \frac{\psi_n(y)}{n!} f(D)^k p(x).
    \end{equation}
    We use \cref{e:expansion} in the following computation, where subscripts such as $x$ and $y$ indicate the variable on which the operator acts:
    \begin{align}\label{e:cor7}
        \phi p(x)
        &= \phi_x \E_y p(x+y) =  \phi_x \E_y \sum_{n=0}^\infty\frac{\psi_n(x)}{n!} f(D)_y^k p(y) \nonumber\\
        &= \sum_{n=0}^\infty \frac{x^n}{n!} \E_y f(D)_y^k p(y) = \sum_{n=0}^\infty \frac{\x^n}{n!} \E f(D)^k p(x). 
    \end{align}
    This proves the equality of the operators.
\end{proof}

Some credit for \Cref{f:garsia} is also due to Rota and his collaborators, since \cref{e:cor7} appears as \cite[Cor.~7, p.~711]{rota1973}.

\subsection{Operational form of the Steffensen formulas}

The second and third formulas follow directly from well-known closed forms for basic sequences due to Steffensen \cite{steffensen1941} (see also \cite{rota1973,beauduin2024}):
\begin{equation}
\phi_n(x) = Q'(D/Q)^{n+1} x^n = x (D/Q)^n x^{n-1}.
\end{equation}
A direct identification of operators is not possible here, since the operators on the right-hand side depend on $n$. However, the identification can be achieved with a simple trick. We begin with the identity $\x D x^n = n x^n$. By linearity, this generalizes to
\begin{equation}\label{e:xD}
g(\x D) x^n = g(n) x^n,
\end{equation}
for any power series $g$ convergent at $n$, an identity due to Boole \cite[eq.~(VI.)]{boole1844}. This leads to the following formulas.
\begin{formula}\label{f:steffensen}
    \begin{equation}\label{e:steffensen}
        \phi = Q'(D/Q)^{\x D+1}.
    \end{equation}
\end{formula}

\begin{example}\label{x:stretch}
    When $Q = D/\lambda$, $\lambda \neq 0$, we call $\str\lambda$ the associated umbral operator. It follows from \Cref{f:steffensen} that $\str\lambda = \lambda^{\x D}$. Additionally, according to \cref{e:xD}, $\str\lambda_n(x) = \lambda^{\x D} x^n = \lambda^n x^n$.
\end{example}

\begin{proof}
    By the definition of exponentiation \cref{e:U^V} and \cref{e:xD},
    \begin{align*}
        Q' (D/Q)^{\x D} x^n
        &= Q' \sum_{k=0}^\infty (D/Q - 1)^k \binom{\x D+1}{k} x^n \\
        &= Q' \sum_{k=0}^\infty (D/Q - 1)^k \binom{n+1}{k} x^n \\
        &= Q' (D/Q)^{n+1} x^n  = \phi_n(x) = \phi x^n,
    \end{align*}
    Hence we may identify the operator with $\phi$.
\end{proof}

The following is shown identically.

\begin{formula}
    \begin{equation}
        \phi = \x (D/Q)^{\x D} \x^{-1}.
    \end{equation}
\end{formula}

\begin{remark}
    A brief remark about exponentiation is in order. We only have
    \begin{equation}
    (D/Q)^{\x D+1} = \frac{D}{Q} (D/Q)^{\x D},
    \end{equation}
    and generally not "$(D/Q)^{\x D+1} = (D/Q)^{\x D} \frac{D}{Q}$". This is a consequence of the convention adopted in \cref{e:U^V}. Although the choice is ultimately arbitrary, it is convenient here, since the alternative convention \cref{e:U^Vrev} would lead to the less elegant closed form "$\phi = Q' (e^{\x D+1})^{\log(D/Q)}$".
\end{remark}

\subsection{Pincherle's contributions}\label{s:pincherle}

The expansion theorem provides a way to expand shift-invariant operators in terms of the derivative $D$. For non-shift-invariant operators, one may ask whether an analogous expansion in terms of $\x$ and $D$ exists, and, if so, what form it takes. These questions were in fact answered by Pincherle in 1895 \cite{pincherle1895} (see also \cite[Chap.~II.II]{pincherle1897} and \cite[Chap.~6]{pincherle1901}). His argument relies on the following theorem, which extends the Leibniz rule to arbitrary operators by means of the Pincherle derivative.

\begin{theorem}[Pincherle's formula]\label{t:pincherle}
    If $U$ is an operator and $p$ is a formal power series, then
    \begin{equation}\label{e:leibniz}
        U p(\x)= \sum_{k=0}^\infty \frac{p^{(k)}(\x) U^{(k)}}{k!}.
    \end{equation}  
\end{theorem}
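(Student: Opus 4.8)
The plan is to view \Cref{t:pincherle} as the formal–power–series packaging of the inversion identity \cref{e:Uxn}: the monomial case will be literally \cref{e:Uxn}, and the general statement follows by $\C$-linearity in $p$ together with a reindexing of a double sum.

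First I would settle the case $p(\x)=\x^{n}$. Here the operator $p^{(k)}(\x)$ is multiplication by $\frac{d^{k}}{dx^{k}}x^{n}=(n)_{k}\,x^{\,n-k}$, which is the zero operator once $k>n$; hence the right–hand side of \cref{e:leibniz} collapses to $\sum_{k=0}^{n}\frac{(n)_{k}}{k!}\,\x^{\,n-k}U^{(k)}=\sum_{k=0}^{n}\binom{n}{k}\,\x^{\,n-k}U^{(k)}$, which is exactly \cref{e:Uxn} (itself obtained by inverting \cref{e:U(n)}). By linearity in $p$ the identity \cref{e:leibniz} then holds for every polynomial $p$. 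For a general series $p(\x)=\sum_{n\ge0}a_{n}\x^{n}$ I would apply this termwise and interchange the two summations,
\[
U\,p(\x)=\sum_{n\ge0}a_{n}\sum_{k=0}^{n}\binom{n}{k}\x^{\,n-k}U^{(k)}=\sum_{k\ge0}\frac{1}{k!}\Bigl(\sum_{n\ge k}a_{n}(n)_{k}\,\x^{\,n-k}\Bigr)U^{(k)}=\sum_{k\ge0}\frac{p^{(k)}(\x)\,U^{(k)}}{k!},
\]
recognizing the inner series as $p^{(k)}(\x)$ by the definition of the $k$-th derivative.

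The only real obstacle is legitimizing that last interchange — equivalently, pinning down the meaning of $U\,p(\x)$ when $p$ is a genuine power series, since multiplication by $p$ already carries $\C[x]$ outside itself while $U$ is a priori defined only on $\C[x]$. Expanding $U^{(k)}x^{m}$ through \cref{e:U(n)} in terms of the series $Ux^{m+j}$ shows that the rearranged series need not converge coefficientwise in $\C\bbra{x}$, so some care or a hypothesis is needed here; I would therefore present \cref{e:leibniz} either for polynomial $p$, where the argument above is unconditional, or under a convergence assumption on the family $\{p^{(k)}(\x)U^{(k)}\}_{k\ge0}$ that validates the exchange — a condition met in the intended application, where $U$ is shift–invariant and the formula yields Pincherle's expansion in $\x$ and $D$.
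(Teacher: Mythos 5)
Your proof is correct and follows essentially the same route as the paper's: expand $p$ into monomials, apply \cref{e:Uxn} termwise, and interchange the two summations to recognize $p^{(k)}(\x)$. Your closing caveat about legitimizing that interchange for a genuine power series $p$ is a point the paper silently glosses over (it applies both sides to a test polynomial $q$ but does not address convergence of $\sum_n a_n U(x^n q(x))$ in $\C\bbra{x}$), so flagging it is a welcome refinement rather than a defect.
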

The general Leibniz rule corresponds to the case $U = D^n$. This case extends easily to operators of the form $q(D)$, a fact known since 1848 \cite{hargeave1848}, but \cref{e:leibniz} is far more general.

\begin{proof}
    Let $(a_n)_{n\ge 0}$ be the coefficients of $p$ and $q$ be a polynomial. Then, using \cref{e:Uxn}, we compute
    \begin{align*}
        &U p(\x) q(x) = \sum_{n=0}^\infty a_n U x^n q(x) = \sum_{n=0}^\infty a_n \sum_{k=0}^n \binom{n}{k} \x^{n-k} U^{(k)} q(x) \\
        ={}& \sum_{k=0}^\infty \inv{k!} \pa{\sum_{n=k}^\infty a_n  (n)_k \x^{n-k}} U^{(k)} q(x) = \sum_{k=0}^\infty \frac{p^{(k)}(\x)}{k!} U^{(k)} q(x).
    \end{align*}
\end{proof}

\begin{corollary}[Pincherle]\label{c:pincherle}
    If $U$ is an operator, then
    \begin{equation}\label{e:pincherle}
        U = \sum_{k=0}^\infty \pa{\sum_{j=0}^k \binom{k}{j} (-\x)^{k-j} U_j(\x)} \frac{D^k}{k!}.
    \end{equation}
\end{corollary}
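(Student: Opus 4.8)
The plan is to start from Pincherle's formula (\Cref{t:pincherle}) applied with the particular power series $p(\x) = e^{t\x}$, treating $t$ as a formal parameter, so that $p^{(k)}(\x) = t^k e^{t\x}$. This gives the operator identity $U e^{t\x} = \sum_{k\ge0} \frac{t^k}{k!} e^{t\x} U^{(k)}$; but actually the cleaner route is to act on the constant polynomial $1$ and extract generating functions. First I would apply both sides of \cref{e:leibniz}, or rather the more elementary relation \cref{e:Uxn}, to $1$: since $\x^j \cdot 1 = x^j$ is still just a polynomial, the point is to feed in $x^n$ and collect. Concretely, I would expand $U x^n = U_n(\x)\cdot 1$ is not quite right either — the honest approach is to use \cref{e:Uxn} in the form $U\x^n = \sum_{k=0}^n \binom nk \x^{n-k} U^{(k)}$, evaluate the operator identity at... no: the right move is to expand a generic $U$ by writing, for any test polynomial, $U q(x)$ and then re-expanding.

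Here is the approach I would actually commit to. The expansion theorem tells us every \emph{shift-invariant} operator is $\sum_k c_k D^k/k!$ with $c_k$ read off by applying to $x^k$ and evaluating at $0$. For a general operator $U$ we want the analogous statement but with coefficients that are polynomials in $\x$. So I would posit $U = \sum_{k\ge0} b_k(\x) \frac{D^k}{k!}$ with unknown $b_k(\x) \in \C[\x]$, apply both sides to $x^n$, and use $D^k x^n = (n)_k x^{n-k}$ to get $U x^n = \sum_{k} b_k(x)(n)_k x^{n-k}/k!$. Setting $x = 0$ kills everything except... no, $b_k(x) x^{n-k}$ at $x=0$ is problematic. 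Instead, the correct inversion: from \cref{e:U(n)} we have $U^{(k)} = \sum_{j=0}^k \binom kj (-\x)^{k-j} U\x^j$, and applying this to the constant $1$ gives $U^{(k)} 1 = \sum_{j=0}^k \binom kj (-\x)^{k-j} U x^j = \sum_{j=0}^k \binom kj (-\x)^{k-j} U_j(\x)$, which is exactly the parenthesized coefficient in \cref{e:pincherle}. So the claim is precisely $U = \sum_k \frac{(U^{(k)}1)\, D^k}{k!}$ where $U^{(k)}1$ is understood as the multiplication operator by the polynomial $U^{(k)}x^0$.

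So the clean proof: apply \Cref{t:pincherle} with $p = $ the formal exponential, or more directly, take \cref{e:leibniz} and act on the constant polynomial $1$. Since $D^m 1 = 0$ for $m \ge 1$, in $U p(\x) 1 = \sum_k \frac{p^{(k)}(\x) U^{(k)}}{k!} 1$ the right side becomes $\sum_k \frac{p^{(k)}(\x) (U^{(k)}1)}{k!}$ — wait, $U^{(k)}$ still acts on $1$ and $U^{(k)}1$ is a polynomial, fine — and the left side is $U p(x)$. Now choose $p(x) = e^{tx}$ (work in $\C\bbra{x,t}$ or argue coefficientwise in $t$, valid since \cref{e:leibniz} holds for each monomial $p = x^n$ separately): then $p^{(k)}(\x) = t^k e^{t\x}$ and we get $U e^{t\x} = e^{t\x}\sum_k \frac{t^k}{k!}(U^{(k)}1)$. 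This is an identity of operators applied to... hmm, $e^{t\x}$ is not a polynomial. The fully rigorous version stays with $p = x^n$: then $U x^n = \sum_{k=0}^n \binom nk \x^{n-k} (U^{(k)}1)$, i.e. $U x^n = \bigl(\sum_{k} \frac{(U^{(k)}1)}{k!}\x^n\bigr)$ after recognizing $D^k x^n / k! \cdot (\cdot)$... The identification I want: $\sum_{k\ge0} \frac{(U^{(k)}1)}{k!} D^k$ applied to $x^n$ equals $\sum_{k=0}^n \frac{(U^{(k)}1)}{k!}(n)_k x^{n-k} = \sum_{k=0}^n \binom nk (U^{(k)}1)\,x^{n-k}$, which by \cref{e:Uxn} (with $U \rightsquigarrow$ something) equals $Ux^n$. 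Then substitute $U^{(k)}1 = \sum_{j=0}^k\binom kj(-\x)^{k-j}U_j(\x)$ from \cref{e:U(n)} and we are done.

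I expect the only real subtlety — the main obstacle — to be the convergence/well-definedness bookkeeping: the sum $\sum_k \frac{(U^{(k)}1)D^k}{k!}$ is infinite, and one must note that when applied to any fixed polynomial $p(x)$ of degree $n$ only the terms $k \le n$ survive (because $D^k p = 0$ for $k>n$), so the expansion is a genuine identity in $\Hom(\C[x],\C\bbra x)$ despite being formally an infinite series; this is exactly the same caveat as in the classical expansion theorem and should be stated explicitly. Everything else is the substitution of \cref{e:U(n)} into the coefficient and relabeling indices, which is routine. I would present it in three lines: (i) invoke \cref{e:leibniz} on each monomial $x^n$, or equivalently rearrange \cref{e:Uxn}; (ii) use $D^k x^n = (n)_k x^{n-k}$ to recognize the right-hand side as $\bigl(\sum_k \frac{U^{(k)}1}{k!}D^k\bigr)x^n$; (iii) expand $U^{(k)}1$ via \cref{e:U(n)} and remark that the operators agree on all of $\C[x]$.
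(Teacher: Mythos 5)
Your final argument is correct and is essentially the paper's own proof: apply \cref{e:leibniz} (equivalently \cref{e:Uxn}) to the constant polynomial $1$, identify $U^{(k)}1 = \sum_{j=0}^k \binom{k}{j}(-\x)^{k-j}U_j(\x)$ via \cref{e:U(n)}, and recognize the resulting identity of operators on monomials. The many false starts and the $e^{t\x}$ detour are unnecessary, but the committed three-step version at the end matches the paper, and your remark that only finitely many terms survive on any fixed polynomial is a worthwhile (if routine) addition.
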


\begin{remark}
    This formula, also due to Pincherle \cite{pincherle1895}, shows that \emph{every} operator in $\End(\C[x])$ can be expressed in terms of $\x$ and $D$, namely
    \begin{equation}
        \End(\C[x]) = \C[\x]\bbra{D}.
    \end{equation}
    An analogous result holds for our set of operators:
    \begin{equation}
        \Hom(\C[x], \C\bbra{x}) = \C\bbra\x\bbra{D}.
    \end{equation}
    Since $[D, \x] = 1$, the operators $D$ and $\x$ generate the Weyl algebra, so $\C[\x][D]$ is isomorphic to it. Consequently, $\Hom(\C[x], \C\bbra{x})$ is a completion of the Weyl algebra, in fact the largest one we can naturally consider. It is important to note, however, that not every such operator extends to an element of $\End(\C\bbra{x})$, as shown by the counterexample in \cite[Ex.~4.7]{beauduin2024}.
\end{remark}

\begin{remark}
    \Cref{t:pincherle} and \Cref{c:pincherle} appear to have largely disappeared from the literature after Pincherle's original work; they are not mentioned in references such as \cite{davis1936,rota1973,bucchianico1996,mansour2015}. To the best of our knowledge, the only notable exception is \cite{mullin1970}.
\end{remark}

\begin{proof}
    According to \cref{e:U(n)},
    \[
    U^{(k)} 1 = \sum_{j=0}^k \binom{k}{j} (-x)^{k-j} U_j(x).
    \]
    Thus, applying \Cref{e:leibniz} to $1$ yields
    \[
    U p(x) = \sum_{k=0}^\infty \frac{p^{(k)}(x)}{k!} (U^{(k)} 1) = \sum_{k=0}^\infty \pa{\sum_{j=0}^k \binom{k}{j} (-\x)^{k-j} U_j(\x)} \frac{D^k}{k!} p(x),
    \]
    which gives \cref{e:pincherle}.
\end{proof}

An immediate corollary is the following formula for $\phi$.
\begin{formula}\label{f:pincherle}
    \begin{equation}
        \phi = \sum_{k=0}^\infty \pa{\sum_{j=0}^k \binom{k}{j} (-\x)^{k-j} \phi_j(\x)} \frac{D^k}{k!}.
    \end{equation}
\end{formula}

A particularly important example is the \emph{composition operator} $C_g$, defined for a formal power series $g$ and a polynomial $p$ by
\begin{equation}
C_g p(x) = p(g(x)),
\end{equation}
which is an operator not necessarily belonging to $\End(\C[x])$. Although the composition operator was introduced by Pincherle \cite{pincherle1895}, the following explicit formula is due to Bourlet \cite[Sec.~V]{bourlet1897b}.

\begin{proposition}[Bourlet]\label{p:compoform1}
    If $g$ is a formal power series, then
    \begin{equation}\label{e:compoform1}
        C_g = (e^{g(\x) - \x})^D.
    \end{equation}
\end{proposition}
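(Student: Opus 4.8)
The plan is to verify that the operator $(e^{g(\x) - \x})^D$ acts correctly on monomials, and then to exploit the generalized exponentiation identity \cref{e:eUV} to collapse the resulting series into the composition operator $C_g$. Writing $U = g(\x) - \x$, the key observation is that $U$ is an operator (multiplication by the power series $g(x) - x$), and since $g(0) = 0$ when $g$ is a compositionally-invertible-type series — but more care is needed here, as $g$ is only assumed to be a formal power series, so I should first check whether $C_g$ is even well-defined as an operator on $\C[x]$; it maps into $\C\bbra{x}$ in general, which is exactly the setting of $\Hom(\C[x], \C\bbra{x})$ allowed by the paper.

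\medskip

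First I would apply \cref{e:eUV} with the substitution $U \mapsto g(\x) - \x$ and $V \mapsto D$, which gives
\begin{equation*}
    (e^{g(\x) - \x})^D = \sum_{n=0}^\infty \inv{n!} (g(\x) - \x)^n D^n.
\end{equation*}
Since $g(\x) - \x$ is a multiplication operator (hence commutes with itself but not with $D$), the order in \cref{e:eUV} matters and is exactly the one displayed. Then I would apply this to a monomial $x^m$ and recognize the right-hand side as a Taylor-type expansion: $(g(\x)-\x)^n D^n x^m = (m)_n (g(x)-x)^n x^{m-n}$, so that
\begin{equation*}
    \sum_{n=0}^\infty \inv{n!} (g(x) - x)^n (m)_n x^{m-n} = \sum_{n=0}^\infty \binom{m}{n} (g(x)-x)^n x^{m-n} = (x + (g(x) - x))^m = g(x)^m,
\end{equation*}
by the binomial theorem for the polynomial $x^m$ (a finite sum). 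This equals $C_g x^m$, and since the two operators agree on every monomial, they are equal.

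\medskip

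The main obstacle I anticipate is not the computation itself — which is short — but the justification of convergence and well-definedness of $(e^{g(\x)-\x})^D$ as an element of $\Hom(\C[x], \C\bbra{x})$: one must check that the series $\sum_n \inv{n!}(g(\x)-\x)^n D^n$, applied to a fixed polynomial, yields a well-defined formal power series. This is fine because $D^n$ annihilates any polynomial for $n$ large, so the sum is actually finite on each polynomial; but I should state this explicitly, since the generalized exponentiation $U^V$ of \cref{e:U^V} is an infinite series and its convergence in the relevant topology on operators needs a remark. A secondary subtlety is that $C_g$ need not lie in $\End(\C[x])$ (if $g$ has a nonzero constant term or is not polynomial), so I would point out that the identity \cref{e:compoform1} should be read in $\Hom(\C[x], \C\bbra{x})$, consistent with the remark following \Cref{c:pincherle}. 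Once these domain issues are dispatched, the proof reduces to the two-line binomial identity above.
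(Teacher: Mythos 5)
Your proof is correct, but it takes a different route from the paper's. The paper derives \cref{e:compoform1} top-down: it plugs $(C_g)_j(x) = g(x)^j$ into Pincherle's expansion (\Cref{c:pincherle}), recognizes the inner sum $\sum_{j} \binom{k}{j} (-\x)^{k-j} g(\x)^j$ as $(g(\x)-\x)^k$ by the binomial theorem in the commutative ring $\C\bbra{\x}$, and only then collapses the series with \cref{e:eUV}. You instead start from the right-hand side, expand it with \cref{e:eUV}, and verify its action on each monomial $x^m$ directly, where the same binomial theorem appears as $\sum_{n} \binom{m}{n} (g(x)-x)^n x^{m-n} = g(x)^m$. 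The two arguments are mathematically equivalent --- your monomial computation is precisely what \Cref{c:pincherle} packages at the operator level --- but yours is more self-contained, needing only \cref{e:eUV} and the binomial theorem, whereas the paper's version deliberately showcases Pincherle's expansion as a general-purpose tool, in keeping with the aims of \Cref{s:pincherle}. Your remarks on well-definedness are sound and go slightly beyond what the paper says explicitly: the series is locally finite since $D^n x^m = 0$ for $n > m$, and the identity is to be read in $\Hom(\C[x], \C\bbra{x})$ because $C_g$ need not preserve $\C[x]$ --- a point the paper itself makes just before stating the proposition.
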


\begin{proof}
    Since $C_g x^n = g(x)^n$, \Cref{c:pincherle} gives
    \[
    C_g = \sum_{k=0}^\infty \pa{\sum_{j=0}^k \binom{k}{j} (-\x)^{k-j} g(\x)^j} \frac{D^k}{k!} = \sum_{k=0}^\infty (g(\x) - \x)^k \frac{D^k}{k!}.
    \]
    So \cref{e:compoform1} follows from \cref{e:eUV}.
\end{proof}

\begin{example}\label{x:stretch2}
    The operator $\str\lambda$ from \Cref{x:stretch} is a composition operator, since the identity $\str\lambda x^n = (\lambda x)^n$ extends to polynomials as $\str\lambda p(x) = p(\lambda x)$. To recover the expression from \Cref{x:stretch} using \Cref{p:compoform1}, we only need the identity $\x^n D^n = (\x D)_n$, due to Boole \cite[eq.~(VII.)]{boole1844}, together with \cref{e:U^V}:
    \[
    \str\lambda = (e^{\lambda \x - \x})^D = \sum_{n=0}^\infty \frac{(\lambda - 1)^n}{n!} \x^n D^n = \sum_{n=0}^\infty (\lambda - 1)^n \binom{\x D}{n} = \lambda^{\x D}.
    \]
\end{example}

\subsection{The Kurbanov-Maksimov expansion}

The fact that operators can be expressed in terms of $\x$ and $D$ goes back to Pincherle's discovery of \Cref{c:pincherle}. An alternative construction was later given by Kurbanov and Maksimov \cite{kurbanov1986}, and Di Bucchianico and Loeb subsequently clarified and generalized it using delta operators \cite{bucchianico1996}. We extend their work by formalizing a formula that was used implicitly, but not stated explicitly, in the proof of \cite[Prop.~21]{bucchianico1996}; see our \Cref{f:bucc}.

\begin{theorem}[Kurbanov-Maksimov]\label{t:KM}
    If $U$ is an operator and
    \[
    \frac{\G_{U \phi}(x, t)}{\G_\phi(x, t)} = \sum_{k=0}^\infty g_k(x) h_k(t),
    \]
    for some formal power series $\{g_k(x)\}_{k\ge0}$ and $\{h_k(t)\}_{k\ge0}$, then $U$ can be written as
    \begin{equation}\label{e:KM}
        U = \sum_{k=0}^\infty g_k(\x) h_k(Q).
    \end{equation}
\end{theorem}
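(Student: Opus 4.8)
The plan is to recast the operator identity \cref{e:KM} as an identity between formal generating functions, using that the delta operator $Q$ acts on $e^{xf(t)} = \G_\phi(x,t)$ as multiplication by $t$.

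First I would reduce to a generating-function statement. Set $W := \sum_{k\ge0} g_k(\x) h_k(Q)$, the operator on the right of \cref{e:KM} (that it is indeed a well-defined operator $\C[x]\to\C\bbra{x}$ is the point I return to below); the goal is $W = U$. For any operator $A$, write $A_x\, e^{xf(t)}$ for $A$ applied in the $x$-variable to \cref{e:genphi}, so that $A_x\, e^{xf(t)} = \sum_{n\ge0} \tfrac{t^n}{n!}\, A\phi_n(x) = \G_{A\phi}(x,t)$. Since $\{\phi_n(x)\}_{n\ge0}$ is a polynomial sequence, hence a basis of $\C[x]$, any operator is determined by this series; thus it suffices to prove $W_x\, e^{xf(t)} = U_x\, e^{xf(t)}$, and by the hypothesis the latter equals $\pa{\sum_k g_k(x) h_k(t)}\, e^{xf(t)}$.

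Then comes the key computation. From the defining relation $Q\phi_n(x) = n\phi_{n-1}(x)$ one gets $Q^j\, e^{xf(t)} = \sum_{n\ge j} \tfrac{(n)_j}{n!}\phi_{n-j}(x)\,t^n = t^j\, e^{xf(t)}$ for each $j\ge0$, hence $h_k(Q)\, e^{xf(t)} = h_k(t)\, e^{xf(t)}$ for every $k$; equivalently, one may differentiate \cref{e:genphi} to get $D\,e^{xf(t)} = f(t)\,e^{xf(t)}$ and invoke $Q = f^{-1}(D)$. Because $g_k(\x)$ is just multiplication by $g_k(x)$, applying $g_k(\x)$ and summing over $k$ gives $W_x\, e^{xf(t)} = \pa{\sum_k g_k(x) h_k(t)}\, e^{xf(t)}$, the equality we wanted; comparing coefficients of $t^n$ then yields $W\phi_n = U\phi_n$ for all $n$, so $W = U$.

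The delicate point — and the step I expect to need the most care — is the legitimacy of these termwise manipulations: that $W = \sum_k g_k(\x) h_k(Q)$ is genuinely an operator and that summation over $k$ commutes with the action on $e^{xf(t)}$. I would settle this by working in the $\{\phi_n\}$ basis: for a polynomial $p = \sum_n c_n\phi_n$ one has $h_k(Q)p = \sum_n c_n \sum_{j\le n} ([t^j]h_k)\,(n)_j\,\phi_{n-j}$, a sum finite in $j$, so the coefficient of any fixed power $x^M$ in $Wp$ is a finite combination of quantities of the form $[x^{m'}t^j]\pa{\sum_k g_k(x)h_k(t)}$ — each finite because $\sum_k g_k(x)h_k(t)$ is a well-defined formal power series, being equal to the ratio $\G_{U\phi}(x,t)/\G_\phi(x,t)$ with $\G_\phi = e^{xf(t)}$ invertible. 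Hence both the well-definedness of $W$ and the interchange are automatic from the hypothesis, and the generating-function computation applies without change.
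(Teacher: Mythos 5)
Your proposal is correct and follows essentially the same route as the paper: both arguments rest on the fact that $Q$ acts on $\G_\phi(x,t)$ as multiplication by $t$, apply the right-hand-side operator to the generating function to obtain $\sum_k g_k(x)h_k(t)\,\G_\phi(x,t) = \G_{U\phi}(x,t)$, and conclude by identifying coefficients of $t^n$ in the basis $\{\phi_n(x)\}_{n\ge0}$. Your additional care about the well-definedness of the right-hand side and the interchange of summations is a sound elaboration of what the paper leaves implicit, not a different method.
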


\begin{proof}
    Since $\G_{Q\phi}(x, t) = t \G_\phi(x, t)$, if we denote by $V$ the right-hand side of \cref{e:KM}, then by linearity and construction
    \[
    \G_{V\phi}(x, t) = \sum_{k=0}^\infty g_k(x) h_k(t) \G_\phi(x, t) = \G_{U \phi}(x, t).
    \]
    Comparing coefficients of $t^n$, we see that \cref{e:KM} holds on the basis $\{\phi_n(x)\}_{n\ge0}$, and therefore on all polynomials.
\end{proof}

Following Di Bucchianico and Loeb \cite[Prop.~21]{bucchianico1996}, we obtain the following formula for $\phi$.
\begin{formula}[Di Bucchianico--Loeb]\label{f:bucc}
    \begin{equation}\label{e:bucc}
        \phi = (e^\x)^{f(D) - D}.
    \end{equation}
\end{formula}

\begin{proof}
    According to \cref{e:genphi}
    \[
    \frac{\G_\phi(x, t)}{\G_1(x, t)} = e^{x(f(t) - t)} = \sum_{k=0}^\infty \inv{k!} x^k (f(t) - t)^k,
    \]
    hence, by \Cref{t:KM}, applied with the identity operator and $U = \phi$, together with \cref{e:eUV},
    \[
    \phi = \sum_{k=0}^\infty \inv{k!} \x^k (f(D) - D)^k = (e^\x)^{f(D) - D}.
    \]
\end{proof}

Expanding $\G_\phi$ via its definition \cref{e:G} in the proof above leads directly to \Cref{f:pincherle}.

\begin{example}
    With $f(t) = \lambda t$, $\phi = \str\lambda$ and the formula $\str\lambda = \lambda^{\x D}$ follows by a calculation similar to that of \Cref{x:stretch2}.
\end{example}

\begin{remark}
    In \cite[p.~711, Prop.~3]{rota1973}, it is shown that
    \begin{equation}\label{e:PDE}
        \phi' = \x \phi (f'(D) - 1),
    \end{equation}
    after which Rota suggests solving the "Pincherle differential equations" in order to obtain an explicit expression for $\phi$. He later restated this challenge as the 15th problem in \cite[p.~753]{rota1973}, describing it as "an untouched subject of great interest". This perspective now appears more tractable, however, because the solution of the Pincherle differential equation \eqref{e:PDE} is precisely the operator in \Cref{f:bucc}. Indeed, the right-hand side of \cref{e:bucc} also satisfies \cref{e:PDE}, by $\C\bbra{\x}$-linearity and the identity $(f(D) - D)' = f'(D) - 1$. The only missing ingredient is a noncommutative analog of the Cauchy uniqueness theorem.
\end{remark}

\section{A new expression for umbral operators}\label{s:proof}

We now seek to express umbral operators in the simplest possible form:
\begin{equation}\label{e:exV}
    \phi = e^{\x V(D)} = 1 + \x V(D) + \inv2 (\x V(D))^2 + \ldots.
\end{equation}
The existence of such a representation was anticipated by Zeilberger \cite[\S 4.2]{zeilberger1980}. Although his argument was not fully rigorous, it can be adapted into a proof of the existence of $V$.

\begin{proposition}\label{p:Vex}
    There exists a formal power series $V$ such that $V(0) = 0$ and $\phi = e^{\x V(D)}$.
\end{proposition}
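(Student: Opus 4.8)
The plan is to build $V$ coefficient by coefficient, using the structure of $\phi$ as an umbral operator. Write $V(D) = \sum_{k\ge 1} v_k D^k$ with unknowns $v_k \in \C$ (the condition $V(0)=0$ is imposed from the start, and we will see it is forced). Expanding the exponential,
\[
e^{\x V(D)} = \sum_{m=0}^\infty \frac{1}{m!}\bigl(\x V(D)\bigr)^m,
\]
and acting on a fixed monomial $x^n$, each term $(\x V(D))^m x^n$ is a polynomial of degree $n$ (since $V$ has no constant term, every application of $V(D)$ strictly lowers degree before $\x$ raises it by one, so $(\x V(D))^m x^n$ has degree $n$ and in fact vanishes for $m > n$). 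Hence the sum is finite and $e^{\x V(D)} x^n$ is a well-defined polynomial of degree $\le n$; we must match it to $\phi_n(x)$.

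The key observation is to look at the leading coefficient. A direct computation shows that $(\x V(D))^n x^n = (n!\, v_1)^n\, x^n$ — each of the $n$ factors must use the $D^1$-term of $V$ to keep the degree from dropping, contributing a factor $v_1 \cdot n, v_1\cdot(n-1),\dots$ whose product is $v_1^n n!$, and the prefactor $1/n!$ in the exponential series cancels one $n!$. So the coefficient of $x^n$ in $e^{\x V(D)}x^n$ equals $v_1^n$ times the coefficient of $x^n$ in $x^n$, i.e. $v_1^n$. Since $\phi_n(x)$ has degree exactly $n$ with leading coefficient $a_1^n$ (from $\phi_n(x) = \phi x^n$ and $\G_\phi(x,t)=e^{xf(t)}$, the top-degree term comes from $e^{x a_1 t}$), matching forces $v_1 = a_1 \ne 0$. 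This handles $n=0,1$ outright and fixes the first unknown.

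For the inductive step, suppose $v_1,\dots,v_{k-1}$ have been determined so that $e^{\x V(D)} x^n = \phi_n(x)$ for all $n < k$, where at this stage $V$ is the truncation $v_1 D + \cdots + v_{k-1}D^{k-1}$ (the undetermined tail does not affect monomials of degree $< k$, again because $V$ lowers degree). Now consider $x^k$. The quantity $e^{\x V(D)} x^k$ depends on $v_k$ only through a single term: the contribution where $D^k$ is applied once, namely $\x V(D) x^k \supseteq \x\cdot v_k D^k x^k = v_k\, k!\, x$, which contributes $v_k\, k!\, x$ to the constant-... more precisely contributes a degree-$1$ term $v_k k!\, x$; all other ways $v_k$ could enter require applying $D^k$ after the degree has already dropped below $k$, which annihilates. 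So $e^{\x V(D)}x^k = P(x) + v_k\,k!\,x$ where $P(x)$ is a polynomial already determined by $v_1,\dots,v_{k-1}$. We need this to equal $\phi_k(x)$. The difference $\phi_k(x) - P(x)$ is a polynomial; the main point is that all of its coefficients except possibly the degree-$1$ one are \emph{automatically} zero — this is where the umbral structure does the work, and it is the crux of the argument. One clean way to see it: both $\phi = e^{\x V_{\text{full}}(D)}$ (if it existed) and the partial construction satisfy, or are forced to satisfy, the Pincherle differential equation $\phi' = \x\phi(f'(D)-1)$ from \eqref{e:PDE}; comparing the two sides degree by degree shows that once the lower coefficients of $V$ are correct, the equation determines $e^{\x V(D)}x^k$ up to exactly the one free parameter $v_k$, and consistency of \eqref{e:PDE} guarantees a solution exists. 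Choosing $v_k := \bigl(\phi_k(x) - P(x)\bigr)\big/(k!\,x)$ (a scalar, by the vanishing of the other coefficients) completes the induction.

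The main obstacle is precisely that vanishing claim: showing that after fixing $v_1,\dots,v_{k-1}$ correctly, the residual polynomial $\phi_k(x) - P(x)$ has \emph{only} a degree-one term, so that the single remaining scalar $v_k$ suffices to correct it. The cleanest route is to package the whole construction through a known closed form — e.g. derive a recursion for $V$ directly from \Cref{f:bucc}, $\phi = (e^\x)^{f(D)-D}$, by formally "taking a logarithm" of the generalized exponential and rewriting it in the form $e^{\x V(D)}$ using the commutator identities for $\x$ and $D$ — rather than arguing the vanishing by hand; this also exposes the promised connection with iteration theory, since solving for $V$ will involve inverting a substitution operator, the operational shadow of the iterative logarithm. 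Either way, the formal-power-series framework guarantees that all the infinite sums encountered act on polynomials as finite sums, so no convergence issues arise.
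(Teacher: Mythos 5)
There is a genuine gap at exactly the point you flag as ``the crux.'' Your induction needs the residual $\phi_k(x) - P(x)$ to be supported purely in degree one, so that the single remaining scalar $v_k$ can absorb it; you assert that ``the umbral structure does the work'' and gesture at two possible routes (the Pincherle differential equation \eqref{e:PDE}, or taking a logarithm of \Cref{f:bucc}), but you carry out neither. Without that step the argument does not close: a one-parameter family $v_k$ cannot correct a discrepancy spread over several coefficients, and nothing you have written rules that out. The paper supplies precisely this missing ingredient by a different mechanism: it writes $L := \log\phi = \sum_k p_k(\x)D^k$ via \Cref{c:pincherle} and then uses the binomial-type identity $\phi_{x+y} = \phi_x\phi_y$, which (since $L_x$ and $L_y$ commute) forces $L_{x+y} = L_x + L_y$ and hence $p_k(x+y) = p_k(x)+p_k(y)$, so each $p_k$ is linear. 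That additivity argument is the structural fact your proposal is missing; it is global rather than coefficient-by-coefficient, and it is what guarantees that no ``extra'' coefficients survive.

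There are also two concrete computational errors worth fixing. First, $(\x V(D))^m x^n$ does \emph{not} vanish for $m>n$: the summand $\x v_1 D$ preserves degree, so for instance $(\x v_1 D)^m x^n = (n v_1)^m x^n$ for all $m$. The series $e^{\x V(D)}x^n$ is therefore not a finite sum; each coefficient is an infinite (though absolutely convergent) numerical series, which is a point that needs addressing rather than dismissing. Second, and for the same reason, the leading coefficient of $e^{\x V(D)}x^n$ is $\sum_m (n v_1)^m/m! = e^{n v_1}$, not $v_1^n$; matching against the leading coefficient $a_1^n$ of $\phi_n(x)$ gives $v_1 = \log a_1$, not $v_1 = a_1$. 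This is consistent with the paper's \Cref{f:Eform}, since $\itlog(f)'(0) = \log f'(0)$, and your value would already fail for $\phi = \str\lambda$, where $V(t) = (\log\lambda)\,t$.
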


\begin{proof}
    By \Cref{c:pincherle}, there exists a sequence of formal power series $(p_k(x))_{k\ge0}$ such that
    \[
    L := \log \phi = \sum_{k=0}^\infty p_k(\x) D^k.
    \]
    Our goal is to show that $p_0(x) = 0$ and that for $k > 0$, $p_k(x)$ is linear. Operationally, the binomial-type property reads $\phi_{x+y} = \phi_x \phi_y$ in $\C[x+y]$ \cite[Prop.~2.3]{beauduin2024}, where the subscript indicates the variable on which $\phi$ acts. Since $L_x$ and $L_y$ commute, translating this identity to $L$ yields
    \[
    e^{L_{x+y}} = e^{L_x} e^{L_y} = e^{L_x + L_y},
    \]
    and therefore $L_{x+y} = L_x + L_y$. We now apply this identity to the monomial $(x+y)^n$ for $n \ge 0$:
    \[
    \sum_{k=0}^n (p_k(x+y) - p_k(x) - p_k(y))\frac{n!}{(n-k)!} (x+y)^{n-k} = 0.
    \]
    By induction, we conclude that for every nonnegative $k$, $p_k(x+y) = p_k(x) + p_k(y)$, which implies the existence of a sequence $(a_k)_{k\ge 0}$ of complex numbers such that $p_k(x)=a_k x$. Defining $V(t) = \sum_{k\ge 0} a_k t^k$, we obtain $\phi = e^{\x V(D)}$. Finally, the condition that $\phi x^n$ remain a polynomial forces $a_0=0$, or equivalently $V(0)=0$.
\end{proof}

\Cref{p:Vex} will not be used in what follows, since we will derive an explicit expression for $V$ directly.

\subsection{The \texorpdfstring{$\LL$}{L} transform}

In \cite[Thm.~2.1]{beauduin2025}, a new endomorphism $\LL$ on the set of operators was introduced. We now characterize the special instance of this transform that is relevant here \cite[Ex.~2.2]{beauduin2025}.

\begin{theorem}\label{t:L}
    There exists a transform $\LL$ defined over the set of operators such that
    \begin{itemize}
        \item $\LL$ is linear.
        \item For all operators $U, V$, $\LL(UV) = \LL(V) \LL(U)$.
        \item $\LL(\x) = D$ and $\LL(D) = \x$.
    \end{itemize}
    In particular, $\LL^{-1} = \LL$, and for any power series $g$, $\LL(g(U)) = g(\LL(U))$.
\end{theorem}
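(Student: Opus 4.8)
The plan is to exhibit $\LL$ explicitly and then check the listed properties; everything is easy except anti-multiplicativity. I would describe $\LL$ in two equivalent ways. First, by \Cref{c:pincherle} and the remark following it, each operator $U$ has a unique normal-ordered expansion $U=\sum_{j,k\ge0}c_{j,k}\,\x^{j}D^{k}$ with $c_{j,k}\in\C$ (this is $U=\sum_{k}p_{k}(\x)D^{k}$ written out, the array $(c_{j,k})$ being arbitrary in $\C^{\mathbb N\times\mathbb N}$), and I would simply set
\[
  \LL(U):=\sum_{j,k\ge0}c_{j,k}\,\x^{k}D^{j},
\]
i.e.\ transpose the coefficient array. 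Equivalently, $\LL(U)$ is the formal adjoint $U^{\dagger}$ with respect to the pairing $\langle x^{m},x^{n}\rangle=m!\,\delta_{m,n}$ on $\C[x]$, for which $\x$ and $D$ are mutually adjoint; this is the instance of the transform of \cite[Ex.~2.2]{beauduin2025}. From either description $\LL$ is $\C$-linear, $\LL(\x)=D$ and $\LL(D)=\x$ (the arrays have the single entries $c_{1,0}=1$ and $c_{0,1}=1$), and $\LL\circ\LL=\mathrm{id}$, since transposing twice — or taking adjoints twice against a symmetric pairing — is the identity; hence $\LL$ is a bijection and $\LL^{-1}=\LL$.

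The substance is the anti-multiplicativity $\LL(UV)=\LL(V)\LL(U)$. Conceptually this is the familiar $(UV)^{\dagger}=V^{\dagger}U^{\dagger}$, and that is the route I would take once composition of operators is set up as in \cite{beauduin2025}. To keep things self-contained from the normal-ordered description, I would first reduce to the two identities, valid for every operator $U$,
\[
  \text{(A)}\quad \LL(U\x)=D\,\LL(U),\qquad\qquad \text{(B)}\quad \LL(UD)=\x\,\LL(U).
\]
Granting these, an induction peeling factors off the right gives $\LL(U\x^{j}D^{k})=\x^{k}D^{j}\LL(U)=\LL(\x^{j}D^{k})\LL(U)$ for all $j,k$; since both sides of $\LL(UV)=\LL(V)\LL(U)$ are $\C$-linear in $V$ and every operator $V$ is the sum of its monomial components $d_{j,k}\x^{j}D^{k}$, the general identity follows. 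Each of (A) and (B) is then a direct coefficient computation: normal-ordering $U\x$ via $D^{k}\x=\x D^{k}+kD^{k-1}$ shows the $\x^{j}D^{k}$-coefficient of $U\x$ is $c_{j-1,k}+(k+1)c_{j,k+1}$; applying $\LL$ transposes this to $c_{k-1,j}+(j+1)c_{k,j+1}$, which, after normal-ordering $D\,\LL(U)$ via $D\x^{j}=\x^{j}D+j\x^{j-1}$, is exactly its $\x^{j}D^{k}$-coefficient; identity (B) is the same computation with $\x$ and $D$ interchanged.

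Finally, the ``in particular'' clauses: $\LL^{-1}=\LL$ was already noted, and for a power series $g=\sum_{n}g_{n}t^{n}$ one gets $\LL(g(U))=\sum_{n}g_{n}\LL(U^{n})=\sum_{n}g_{n}\LL(U)^{n}=g(\LL(U))$, using $\LL(U^{n})=\LL(U)^{n}$ — anti-multiplicativity introduces no reversal among powers of a single operator. I expect the main obstacle to be the bookkeeping in (A)/(B) together with a careful check that all the infinite sums really define operators and may be rearranged: this is harmless because on any fixed monomial $x^{n}$ only the finitely many terms of $D$-degree at most $n$ contribute, so each manipulation is a finite one on each $x^{n}$. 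The one genuinely delicate point is that $\LL(UV)=\LL(V)\LL(U)$ presupposes a meaning for the composite $UV$ even when $U,V$ do not preserve $\C[x]$; I would handle this precisely as in the framework of \cite[Thm.~2.1]{beauduin2025}, or, if one prefers, assert the identity on the subalgebra of operators preserving $\C[x]$ — which already contains $\x$, $D$, and every umbral operator together with its exponential factors.
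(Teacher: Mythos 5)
Your construction is correct, but note that the paper itself offers no proof of \Cref{t:L}: it simply imports the transform from \cite[Thm.~2.1, Ex.~2.2]{beauduin2025}, so there is no internal argument to compare against. What you supply is therefore a genuinely self-contained alternative. Defining $\LL$ by transposing the coefficient array of the normal-ordered form $U=\sum_{j,k}c_{j,k}\x^jD^k$ (whose existence and uniqueness do follow from \Cref{c:pincherle} and the remark after it), and reducing anti-multiplicativity to the two peeling identities $\LL(U\x)=D\,\LL(U)$ and $\LL(UD)=\x\,\LL(U)$, is sound; I checked the coefficient computation in (A) --- the $\x^JD^K$-coefficient of $U\x$ is $c_{J-1,K}+(K+1)c_{J,K+1}$, whose transpose $c_{K-1,J}+(J+1)c_{K,J+1}$ is indeed the $\x^JD^K$-coefficient of $D\,\LL(U)$ --- and (B) is even easier than you suggest, since $UD$ merely shifts the $k$-index. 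The equivalent description as the formal adjoint for the pairing $\langle x^m,x^n\rangle=m!\,\delta_{m,n}$ is very likely close to what the cited reference actually does, so your two descriptions bracket both routes. Your closing caveat is the one real issue, and it is an issue with the theorem as stated rather than with your proof: $\Hom(\C[x],\C\bbra{x})$ is not closed under composition, so ``for all operators $U,V$'' cannot be taken literally, and even restricting $U,V$ to $\End(\C[x])$ does not guarantee that $\LL(V)\LL(U)$ is defined (e.g.\ $\LL(e^D)=e^{\x}$ leaves $\End(\C[x])$). You flag this and defer to the framework of \cite{beauduin2025}, which is the honest resolution; just be aware that the ``finitely many terms contribute on each $x^n$'' argument needs the hypothesis that the inner operator preserves $\C[x]$ (or an extension of the outer one to $\C\bbra{x}$) to even state the claim, and that passing from monomial $V$ to general $V$ by linearity is a coefficient-wise finiteness argument that deserves one explicit sentence. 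With that domain restriction made precise, the proof is complete and covers everything the theorem asserts, including $\LL^{-1}=\LL$ and $\LL(g(U))=g(\LL(U))$.
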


This transform converts operator identities into forms that are sometimes much easier to handle.

\begin{lemma}[{\cite[Ex.~2.2]{beauduin2025}}]\label{l:LCphi}
    \begin{equation}
        \LL(C_f) = \phi.
    \end{equation}
\end{lemma}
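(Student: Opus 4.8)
The plan is to exploit \Cref{t:L} to transfer the Bourlet composition formula (\Cref{p:compoform1}) through the $\LL$ transform. Recall \cref{e:compoform1} states $C_f = (e^{f(\x) - \x})^D$, and our target \Cref{f:bucc} states $\phi = (e^\x)^{f(D) - D}$. So the natural approach is simply to apply $\LL$ to both sides of \cref{e:compoform1} and check that the right-hand side becomes the right-hand side of \cref{e:bucc}, while the left-hand side becomes $\phi$ by definition of $\LL(C_f)$ --- but wait, that would make the lemma a consequence of \Cref{f:bucc}, whereas the excerpt presents the lemma before invoking \Cref{f:bucc} in this section. In fact the cleaner logical route, which I would take, is the converse: use the lemma together with \Cref{p:compoform1} and \Cref{t:L} to \emph{reprove} \Cref{f:bucc}; but since the task is to prove the lemma itself, I will instead establish it directly from the generating function characterization.

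First I would recall that $C_f$ sends $x^n \mapsto f(x)^n$, so its generating function is $\G_{C_f}(x,t) = \sum_n f(x)^n t^n / n! = e^{t f(x)}$. Meanwhile $\phi$ is the umbral operator generated by $f$, so $\G_\phi(x,t) = e^{x f(t)}$. The key symmetry is that these two generating functions are related by swapping the roles of $x$ and $t$: $\G_{C_f}(x,t) = e^{t f(x)} = \G_\phi(t, x)$. This $x \leftrightarrow t$ duality is exactly what the $\LL$ transform is built to capture at the operator level, since $\LL$ swaps $\x$ (the operator that "multiplies by the base-point variable") with $D$ (the operator dual to $t$ under the generating-function pairing). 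So the plan is: express $C_f$ via \Cref{c:pincherle} or \Cref{p:compoform1} as a series in $\x$ and $D$, apply $\LL$ termwise using linearity, the anti-homomorphism property $\LL(UV) = \LL(V)\LL(U)$, and $\LL(\x) = D$, $\LL(D) = \x$, and then identify the resulting operator as $\phi$ by computing its generating function or its action on monomials.

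Concretely: from the proof of \Cref{p:compoform1} we have $C_f = \sum_{k\ge 0} (f(\x) - \x)^k D^k / k!$. Applying $\LL$ and using that it is linear, reverses products, sends $g(\x) \mapsto g(D)$ and $D^k \mapsto \x^k$, we get $\LL(C_f) = \sum_{k \ge 0} \x^k (f(D) - D)^k / k!$, where the order is now $\x^k$ on the left because $\LL$ reversed the factors $(f(\x)-\x)^k$ and $D^k$. By \cref{e:eUV} this equals $(e^\x)^{f(D) - D}$, which by \Cref{f:bucc} is $\phi$. Alternatively, to keep the lemma logically prior to \Cref{f:bucc}, I would just verify directly that $\sum_k \x^k (f(D)-D)^k/k!$ applied to $x^n$ reproduces $\phi_n(x)$ --- but this essentially re-derives \Cref{f:bucc}, so the honest statement is that the lemma and \Cref{f:bucc} are equivalent via $\LL$, and I would present whichever of \Cref{p:compoform1} or \Cref{f:bucc} is taken as known.

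The main obstacle is bookkeeping the order-reversal correctly: because $\LL$ is an \emph{anti}-homomorphism, the factor $f(\x) - \x$ (which sits to the left of $D^k$ in $C_g$, and whose image $f(D)-D$ does not commute with $\x$) must be moved to the right of $\x^k$, and one has to be careful that $\LL\bigl((f(\x)-\x)^k\bigr) = (f(D)-D)^k$ (same power, no reversal within the single repeated factor) while $\LL\bigl((f(\x)-\x)^k D^k\bigr) = \x^k (f(D)-D)^k$. A secondary subtlety is that $C_f$ is not in $\End(\C[x])$ in general, so one should confirm that \Cref{t:L} and the series manipulations are valid for operators in $\Hom(\C[x], \C\bbra{x})$, which the statement of \Cref{t:L} (phrased "over the set of operators") and \Cref{c:pincherle}'s remark both support. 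Everything else is a routine composition of identities already in hand.
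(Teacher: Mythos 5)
Your proposal is correct and follows essentially the same route as the paper: apply $\LL$ to Bourlet's expansion $C_f = \sum_k (f(\x)-\x)^k D^k/k!$, use linearity and the order-reversing property to obtain $\sum_k \x^k (f(D)-D)^k/k! = (e^\x)^{f(D)-D}$, and conclude via \Cref{f:bucc}. Your worry about logical ordering is unfounded --- \Cref{f:bucc} is established in \Cref{s:known}, before the lemma, so the paper invokes it exactly as you do.
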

This result clearly exhibits a kind of duality between $C_f$ and $\phi$, and raises the question of how the action of $\LL$ should be interpreted.
\begin{proof}
    This result follows from \Cref{p:compoform1}, the properties listed in \Cref{t:L} and \Cref{f:bucc}:
    \begin{align*}
        \LL(C_f) &= \LL((e^{f(\x) - \x})^D) = \sum_{n=0}^\infty \inv{k!} \LL(D^k) \LL((f(\x) - \x)^k) \\
        &= \sum_{n=0}^\infty \inv{k!} \x^k(f(D) - D)^k = (e^\x)^{f(D) - D} = \phi.
    \end{align*}
\end{proof}

\subsection{Iteration theory and proof of the new formula}

The notion of \emph{fractional iteration}, that is, non-integral compositional iterates of a formal power series, can be defined uniquely under suitable hypotheses; see \cite{labelle1980,ecalle1974,ecalle1975}. More precisely, for a formal power series $g$, one seeks a family $(g^s)_{s\in\C}$ such that $g^1 := g$ and
\[
g^r \circ g^s = g^{r+s}
\]
for all $r, s \in \C$. This is the central object of \emph{iteration theory}. The \emph{iterative logarithm} $\itlog$ is the natural analog of the ordinary logarithm in the setting where composition plays the role of multiplication. For instance, it satisfies $\itlog(g^s) = s \itlog(g)$ and is defined by
\begin{equation}
    \itlog(g) := \odv{g^s}{s}_{s=0}.
\end{equation}

Bourlet was the first to apply composition operators to iteration theory \cite{bourlet1898}. \'Ecalle later reintroduced this point of view and proved the following formula \cite{ecalle1970,ecalle1971,ecalle1974,ecalle1975}.

\begin{proposition}\label{p:ecalle}
    For an invertible formal power series $g$ and $s\in\C$,
    \begin{equation}\label{e:Citlog}
        C_{g^s} = e^{s \itlog(g)(\x) D}.
    \end{equation}
\end{proposition}

We follow the proof of Labelle \cite{labelle1980}.

\begin{proof}
    For some power series $p$, first we compute via the chain rule
    \[
    \begin{gathered}
        \odv*{p(g^s(x))}{s} = \odv*{p(g^{s+r}(x))}{s}_{r=0} = \odv*{p(g^{s+r}(x))}{r}_{r=0} = \odv*{p(g^s(g^r(x)))}{r}_{r=0} \\
        = \odv{g^r(x)}{r}_{r=0} \odv{p(g^s(x))}{x} = \itlog(g)(x) \odv{p(g^s(x))}{x} = \itlog(g)(\x) D p(g^s(x)).
    \end{gathered}
    \]
    This is a differential equation in $s$, with solution
    \[
    p(g^s(x)) = e^{s\itlog(g)(\x) D} p(x),
    \]
    hence \cref{e:Citlog}.
\end{proof}

This leads directly to our main result.

\begin{formula}\label{f:Eform}
    \begin{equation}
        \phi = e^{\x\itlog(f)(D)}.
    \end{equation}
\end{formula}

\begin{proof}
We simply combine the \Cref{l:LCphi} with \Cref{p:ecalle}:
\[
\phi = \LL(C_f) = \LL(e^{\itlog(f)(\x) D}) = e^{\x \itlog(f)(D)}.
\]
\end{proof}

\begin{example}
    If $f(t) = \lambda t$, then $f^s(t) = \lambda^s t$ and
    \[
    \itlog(f)(t) = \odv{\lambda^s t}{s}_{s=0} = \log(\lambda) t.
    \]
    Therefore, with \Cref{f:Eform}, we find once again that $\str\lambda = \lambda^{\x D}$.
\end{example}

\subsection{Converse of the theorem}\label{s:converse}

We have expressed $V$ in terms of $f$, but the reverse direction is also possible: given $V$, we can find $f$ such that $e^{\x V(D)}$ is the umbral operator associated with $f$.

Our first ingredient is the well-known operational form of Taylor's theorem due to Lagrange in 1772 \cite{lagrange1772}. For a polynomial $p$, if $E^a p(x) := p(x+a)$, then
\begin{equation}\label{e:shift}
    E^a = e^{aD}.
\end{equation}
This identity was extended by Reverend Graves in 1850 \cite{graves1850}.

\begin{proposition}\label{p:compoform2}
    Let $V$ be a formal power series and let $h$ be a function satisfying $h'(t) = 1/V(t)$. Then $g(t) := h^{-1}(h(t) + 1)$ defines a formal power series satisfying $g^s(t) = h^{-1}(h(t) + s)$, for all $s\in\C$, and $\itlog(g) = V$. In addition,
    \begin{equation}
        e^{s V(\x)D} = C_{g^s}.
    \end{equation}
\end{proposition}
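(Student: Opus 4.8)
The strategy is to take the explicit family $g^{s} := h^{-1}\!\pa{h(\cdot)+s}$ at face value, show it is a one-parameter iteration group consisting of formal power series, read off $\itlog(g)=V$ for $g:=g^{1}$ by differentiating at $s=0$, and then quote \Cref{p:Ecalle}. The group law is immediate from the Abel equation $h\!\pa{g^{s}(t)}=h(t)+s$ built into the definition of $g^{s}$: since $h\!\pa{g^{s}(g^{r}(t))}=h\!\pa{g^{r}(t)}+s=h(t)+r+s=h\!\pa{g^{s+r}(t)}$, we get $g^{s}\circ g^{r}=g^{s+r}$ and $g^{0}=\mathrm{id}$, so $g^{n}$ is the $n$-fold composite of $g$ and $(g^{s})_{s\in\C}$ interpolates the integer iterates of $g$; by the uniqueness of fractional iteration \cite{labelle1980,ecalle1974,ecalle1975} it therefore is the iteration group of $g$. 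Differentiating $g^{s}(t)=h^{-1}\!\pa{h(t)+s}$ at $s=0$ and using $h'=1/V$ yields
\[
\itlog(g)(t)=\odv*{h^{-1}\!\pa{h(t)+s}}{s}_{s=0}=\pa{h^{-1}}'\!\pa{h(t)}=\frac{1}{h'\!\pa{h^{-1}(h(t))}}=\frac{1}{h'(t)}=V(t).
\]
Finally, the proof of \Cref{p:Ecalle} uses only that $(g^{s})$ is a one-parameter group, that $g^{0}=\mathrm{id}$, and the identity $\itlog(g)=\odv{g^{s}}{s}_{s=0}$, so it goes through verbatim and gives $C_{g^{s}}=e^{s\,\itlog(g)(\x)D}=e^{sV(\x)D}$, the asserted identity.

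The one delicate point — and what I expect to be the main obstacle — is the claim that $g^{s}$, and in particular $g=g^{1}$, is genuinely an element of $\C\bbra{t}$, since $h$ itself is \emph{not}: the relevant situation is $V(0)=0$ (otherwise $e^{\x V(D)}$ does not even preserve $\C[x]$), so $1/V$ has a pole at the origin and $h$ acquires a logarithmic term — and, when $V$ vanishes to higher order, a polar term as well. One must therefore read the formulas above in a larger ring that accommodates formal logarithms and check that $g^{s}$ lands back in $\C\bbra{t}$. When $\lambda:=V'(0)\neq 0$ this is transparent through the Schr\"oder--Fatou coordinate: writing $1/V(t)=1/(\lambda t)+\eta(t)$ with $\eta\in\C\bbra{t}$ gives $h(t)=\tfrac{1}{\lambda}\log t+\psi(t)$ with $\psi\in\C\bbra{t}$, and exponentiating $\lambda h\!\pa{g^{s}(t)}=\lambda h(t)+\lambda s$ shows that $H(\tau):=\tau\,e^{\lambda\psi(\tau)}$ — a compositionally invertible formal power series, as $H(0)=0$ and $H'(0)=e^{\lambda\psi(0)}\neq0$ — satisfies $H\!\pa{g^{s}(t)}=e^{\lambda s}H(t)$, whence
\[
g^{s}(t)=H^{-1}\!\pa{e^{\lambda s}H(t)}\in\C\bbra{t},
\]
the linearization of $g$ with multiplier $g'(0)=e^{\lambda}$; this closed form also makes the group structure and the fact that $g^{s}$ is a power series entirely manifest.

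When $\lambda=0$, that is, $V$ vanishes to some order $m\ge 2$ and $g$ is tangent to the identity, the corresponding assertion — that a power series vanishing to order $m\ge 2$ is the iterative logarithm of a unique tangent-to-the-identity formal power series all of whose formal iterates are again formal power series — is precisely the content of \'Ecalle's theory of parabolic iteration \cite{ecalle1974,ecalle1975} (see also \cite{labelle1980}), which I would cite; one then checks that the formal solution of the Abel equation $h\!\pa{g^{s}}=h+s$ in the relevant ring of transseries agrees with $g^{s}$, which identifies it and lets the argument of the previous paragraph run unchanged. This parabolic case is the one that occurs in the Laguerre application, where $V(t)=-t^{2}$, so it genuinely has to be treated.
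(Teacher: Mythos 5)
Your proof is correct and follows the same skeleton as the paper's: group law from the Abel equation $h(g^s(t))=h(t)+s$, differentiation at $s=0$ to get $\itlog(g)=V$, and an \'Ecalle-type argument for the exponential formula. Two points differ in substance. First, for the operator identity $e^{sV(\x)D}=C_{g^s}$ you invoke \Cref{p:Ecalle} as a black box, whereas the paper re-derives it directly by conjugating the shift operator: setting $P=p\circ h^{-1}$ and using the chain rule $D_x=h'(\x)D_{h(x)}$, one gets $e^{sV(\x)D}p(x)=e^{sD_{h}}P(h)=P(h+s)=p(h^{-1}(h(x)+s))$, which is the Lagrange--Graves mechanism of \cref{e:shift} announced just before the proposition; both routes are valid, and yours is shorter. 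Second, and more interestingly, you devote most of your effort to the genuinely delicate point that $g^s$ lies in $\C\bbra{t}$ even though $h$ does not (logarithmic and polar terms when $V(0)=0$), resolving the hyperbolic case by the Koenigs linearization $g^s=H^{-1}(e^{\lambda s}H)$ and deferring the parabolic case to \'Ecalle. The paper handles this in the opposite order: it first establishes $G(s,x)=e^{sV(\x)D}x$ by rerunning the differential-equation argument of \Cref{p:Ecalle}, and then reads off that $g=G(1,\cdot)$ is a formal power series because the operational expression never leaves $\C\bbra{x}$ --- only afterwards does it invoke uniqueness of fractional iterates to identify $g^s$ with $G(s,\cdot)$. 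Your treatment is heavier but makes explicit a foundational point the paper passes over in one sentence; the paper's ordering is the slicker resolution, since the operator formula itself certifies that $g^s$ is a power series without any excursion into transseries.
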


Here the definition of $h$ is meant only formally. In particular, when $V$ is a formal power series, $h$ need not belong to the same class of functions, and expressions such as $h^{-1}(h(t)+s)$ should be understood as formal notation for the flow associated with $V$.

\begin{proof}
    The functions $G(s, x) := h^{-1}(h(x) + s)$ satisfy the functional equation $G(s, G(r, x)) = G(s+r, x)$. Moreover, $\odv*{G(s, x)}{s}_{s=0} = V(x)$, so by an argument similar to the proof of \Cref{p:ecalle}, we obtain $G(s, x) = e^{s V(\x) D} x$. Since $V$ is a formal power series, so is $g(x) = G(1, x)$. By uniqueness of the fractional iterates of the formal power series $g$, it follows that $g^s(x) = G(s, x)$ and therefore $\itlog(g) = V$.

    Let $p$ be a polynomial and $P(x) = p(h^{-1}(x))$, then by the chain rule $D_x = h'(\x) D_{h(x)}$. Thus, according to \cref{e:shift}
    \begin{align*}
        e^{sV(\x)D} p(x) &= e^{s V(\x) h'(\x) D_h} P(h) = E_h^s P(h) = P(h+s) \\
        &= p(h^{-1}(h(x) + s)) = C_{g^s} p(x).
    \end{align*}
\end{proof}

Combining \Cref{p:compoform1,p:compoform2}, we obtain the identity $e^{V(\x)D} = (e^{g(\x) - \x})^D$, which can also be obtained from a theorem of Viskov \cite{viskov1997a}; see \cite[Cor.~5.2]{beauduin2025}.

\begin{corollary}\label{c:V}
    For a power series $V$, if $h$ is a function such that $h'(t) = 1/V(t)$, then for all $s\in\C$
    \[
    e^{s \x V(D)} = (e^\x)^{h^{-1}(h(D) + s) - D}.
    \]
    In particular, if $V(0) = 0$, $e^{s \x V(D)}$ is the umbral operator generated by $g^s(t) := h^{-1}(h(t) + s)$.
\end{corollary}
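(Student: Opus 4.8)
The plan is to derive this corollary as a direct consequence of \Cref{p:compoform2} combined with \Cref{t:L} and \Cref{f:bucc}. Observe that \Cref{p:compoform2} gives us, for the \emph{multiplication} version $V(\x)$, that $e^{sV(\x)D} = C_{g^s}$ where $g^s(t) = h^{-1}(h(t)+s)$. The task is to transfer this to the operator $e^{s\x V(D)}$, which is the $\LL$-image of $e^{sV(\x)D}$.

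First I would apply the $\LL$ transform from \Cref{t:L} to the identity $e^{sV(\x)D} = C_{g^s}$. Since $\LL$ is linear, reverses products, and swaps $\x$ and $D$, we get $\LL(e^{sV(\x)D}) = e^{s\x V(D)}$ on the left (using that $\LL(V(\x)D) = \LL(D)\LL(V(\x)) = \x V(D)$ and $\LL(g(U)) = g(\LL(U))$ applied to the exponential). On the right, $\LL(C_{g^s}) = \phi^{(g^s)}$ if $g^s$ happens to be a delta-generating series, but more directly I would mirror the proof of \Cref{l:LCphi}: apply $\LL$ to Bourlet's formula $C_{g^s} = (e^{g^s(\x) - \x})^D$ from \Cref{p:compoform1}, which yields $\LL(C_{g^s}) = (e^{\x})^{g^s(D) - D} = (e^{\x})^{h^{-1}(h(D)+s) - D}$. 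Chaining these gives exactly the displayed formula $e^{s\x V(D)} = (e^{\x})^{h^{-1}(h(D)+s)-D}$.

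For the ``in particular'' clause, I would invoke \Cref{f:bucc}: the umbral operator generated by a compositionally invertible series $F$ equals $(e^{\x})^{F(D)-D}$. When $V(0)=0$, \Cref{p:compoform2} already establishes that $g(t) = h^{-1}(h(t)+1)$ is a formal power series with $g^s(t) = h^{-1}(h(t)+s)$; since $V(0)=0$ forces $h$ to have a pole structure making $g(0)=0$ and $g'(0)\neq 0$ (so $g^s$ is compositionally invertible), each $g^s$ generates an umbral operator, which by \Cref{f:bucc} is precisely $(e^{\x})^{g^s(D)-D} = e^{s\x V(D)}$. I should double-check that $V(0)=0$ indeed guarantees $g$ is compositionally invertible: writing $V(t) = a_1 t + \cdots$ with the structure of an iterative logarithm, $h(t) \sim -1/(a\,t^{?})$ or $h(t) \sim \log t$ type behavior near $0$ should give $g(t) = t + \cdots$, i.e.\ $g$ is tangent to the identity or at least fixes $0$ with nonzero derivative.

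The main obstacle I anticipate is the bookkeeping around whether $g^s$ is genuinely a formal power series fixing the origin with nonzero linear coefficient — this is exactly the compositional-invertibility hypothesis that \Cref{f:bucc} implicitly requires, and it depends delicately on the $V(0)=0$ assumption and the precise local form of $h$. The algebraic manipulation with $\LL$ is essentially mechanical given \Cref{t:L}, and the identification with \Cref{f:bucc} is a one-line citation; but verifying that the hypotheses of \Cref{f:bucc} are met (and that $g^s$ from \Cref{p:compoform2} has the right form at $0$) is where care is needed. A secondary subtlety is confirming $\itlog(g) = V$ transfers correctly so that the generated umbral operator has delta operator $f^{-1}(D)$ with $f = g$, matching the exponent in \Cref{f:Eform}; but this is already handled inside \Cref{p:compoform2}.
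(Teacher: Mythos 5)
Your proposal is correct and follows essentially the same route as the paper: combine \Cref{p:compoform1,p:compoform2} and apply the $\LL$ transform to obtain the displayed identity, then identify the result as the umbral operator generated by $g^s$ via \Cref{f:bucc} (the paper cites \Cref{l:LCphi}, whose proof is exactly that identification). Your extra care about the compositional invertibility of $g^s$ when $V(0)=0$ is a point the paper passes over silently, but it does not change the argument.
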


\begin{proof}
    We combine \Cref{p:compoform1,p:compoform2} and apply $\LL$:
    \begin{align*}
        \LL(e^{sV(\x)D}) &= \LL((e^{h^{-1}(h(\x) + s) - \x})^D), \\
        e^{s\LL(V(\x)D)} &= (e^{\LL(D)})^{\LL(h^{-1}(h(\x) + s) - \x)}, \\
        e^{s\x V(D)} &= (e^\x)^{h^{-1}(h(D) + s) - D}.
    \end{align*}
    If $V(0) = 0$, then $e^{s\x V(D)}$ is an endomorphism of the polynomials, and since $e^{s\x V(D)} = \LL(C_{g^s})$, \cref{l:LCphi} shows that it is the umbral operator generated by $g^s(t)$.
\end{proof}

\section{Fractional exponents of umbral operators}\label{s:frac}

In light of \Cref{f:Eform}, there is a natural way to define fractional exponents of an umbral operator:
\begin{equation}
    \phi^s := e^{s\x \itlog(f)(D)}.
\end{equation}
Since $s \itlog(f) = \itlog(f^s)$, the operator $\phi^s$ is generated by the formal power series $f^s$. This is precisely the definition adopted in \cite{beauduin2025b}. It contrasts with an alternative definition based on the binomial series, which involves convergence issues.

In \cite{beauduin2025b} we derived several formulas for the coefficients of $f^s(t)^k$, which in turn determine the coefficients of $\phi^s x^n =: \phi^s_n(x)$. For instance, if we let $q := f'(0)$, then the coefficient $\coeff{n}{k}_{\phi^s}$ of $x^k$ in the polynomial $\phi^s_n(x)$ is given by \cite[Thm.~7]{beauduin2025b}
\begin{equation}
    \coeff{n}{k}_{\phi^s} = \sum_{p=0}^{n-k} \coeff{n}{k}_{\phi^p} \qbinom{s}{p} \qbinom{n-k-s}{n-k-p} q^{(n-p)(s-p)},
\end{equation}
where we use the \emph{$q$-binomial coefficient}. In the same work, we also derived an expansion for the iterative logarithm of $f$. Its coefficients are given by $\odv*{\coeff{n}{1}_{\phi^s}}{s}_{s=0}$, which may be interpreted as the coefficient $\coeff{n}{1}_{\log\phi}$, since $\log \phi = \x \itlog(f)(D)$ by \Cref{f:Eform}. This logarithm is the infinitesimal generator of the one-parameter group $(\phi^s)_{s\in\C}$, which satisfies
\begin{equation}
\phi^{s+t} = \phi^s \phi^t.
\end{equation}
In addition, the delta operator $Q^{[s]} := f^{-s}(D)$ associated with $\phi^s$ satisfies
\begin{equation}
Q^{[t]}\phi^s = \phi^s Q^{[t-s]},
\end{equation}
for all $s, t\in\C$, extending \cite[Prop.~2.8]{beauduin2024}. Furthermore, $(Q^{[s]})_{s\in\C}$ itself forms a one-parameter group under the law
\begin{equation}
Q^{[s+t]} = Q^{[s]} \diamond Q^{[t]} := (f^{-s} \circ f^{-t})(D),
\end{equation}
with infinitesimal generator $-\itlog(f)(D)$. Thus, the preceding results provide a complete answer to Rota's third problem \cite[Sec.~14]{rota1973}, now in the more general setting where $f'(0)$ need not equal $1$.

\section{Example: degenerate Laguerre polynomials}\label{s:laguerre}

We apply the construction of \Cref{c:V} with $V(t) = -t^{p+1}$, where $p\geq 0$ is an integer. Let $L_p := e^{-\x D^{p+1}}$. By \Cref{x:stretch}, we have $L_0 = \str{1/e}$, so from now on we restrict to the case $p > 0$. Let $h(t) := \frac{1}{pt^p}$, which satisfies $h'(t) = 1/V(t)$. Then
\begin{equation}\label{e:f^s}
    f^s(t) := h^{-1}(h(t) + s) = \sqrt[-p]{p\pa{\frac{1}{p t^p} + s}} = \sqrt[-p]{\frac{1 + sp t^p}{t^p}} = \frac{t}{\sqrt[p]{1 + sp t^p}}.
\end{equation}
The corresponding delta operator is therefore $\Psi_p := f^{-1}(D) = D/\sqrt[p]{1-pD^p}$. The case $p = 1$ corresponds to the classical \emph{Laguerre polynomials} \cite[Sec.~6.5]{beauduin2024}.

We refer to $L_{p, n}(x) := (L_p)_n(x)$ as the \emph{degenerate Laguerre polynomials}, and define their \emph{associated} version by $\Lag\alpha_p := (1-pD^p)^{\frac{\alpha}{p}} L_p$. They form an example of a \emph{cross-sequence}, so by \cite[Prop.~3.6]{beauduin2024}, for all $\alpha, \beta$,
\begin{equation}
    \Lag{\alpha + \beta}_{p,n}(x+y) = \sum_{k=0}^n \binom{n}{k} \Lag\alpha_{p,k}(x) \Lag\beta_{p,n-k}(y),
\end{equation}
and their generating function is given by \cite[Thm.~4.2]{beauduin2024}
\begin{equation}
    \sum_{n=0}^\infty \Lag\alpha_{p, n}(x) \frac{t^n}{n!} = \inv{(\sqrt[p]{1 + p t^p})^\alpha} \exp\pa{\frac{xt}{\sqrt[p]{1+p t^p}}}.
\end{equation}

$\Psi_p' = (1-pD^p)^{-\inv{p}-1}$, thus, with the transfer formula \cite[Thm.~3.4]{beauduin2024} we have
\[
L_{p,n}(x) = \Psi_p' (\sqrt[p]{1-pD^p})^{n+1} x^n = (1-pD^p)^{\frac{n}{p} - 1} x^n,
\]
therefore
\begin{align}\label{e:lagalpha}
    \Lag\alpha_{p,n}(x) &= (1-pD^p)^{\frac{n+\alpha}{p} - 1} x^n = \sum_{k=0}^\infty \binom{\frac{n+\alpha}{p} -1}{k} (-p D^p)^k x^n \nonumber\\
    &= \sum_{k=0}^{\floor{n/p}} \binom{\frac{n+\alpha}{p} - 1}{k} \frac{n! (-p)^k}{(n-pk)!} x^{n-pk}.
\end{align}

A simple expression for the logarithm of the operator $\Lag\alpha_p$ follows from the identity
\begin{equation}\label{e:LagUO}
    \Lag\alpha_p = \exp(-\x D^{p+1} - \alpha D^p).
\end{equation}
The case $p=\alpha = 1$, corresponding to a different convention for the Laguerre polynomials, was discovered by Dattoli and Torre \cite{dattoli1998}; see also \cite{dattoli1999,dattoli2000}. Dattoli and Migliorati later extended this result to arbitrary $\alpha$ \cite[eq.~(33) with $f(x) = x^n$ and $y = 1$]{dattoli2008}. Formula \eqref{e:LagUO} then follows quickly from the \emph{Berry identity} \cite{berry1966,beauduin2025,mansour2015}: since the commutator satisfies $[D^p, \x D^{p+1}] = p (D^p)^2$, it follows from \cite[Cor.~3.4 and Prop.~2.4]{beauduin2025} that
\[
\exp(-\x D^{p+1} - \alpha D^p) = ((1-p D^p)^{1/p})^\alpha e^{-\x D^{p+1}} = \Lag\alpha_p.
\]
The degenerate associated Laguerre polynomials $\Lag\alpha_{p,n}(x)$ satisfy the differential equation
\begin{equation}\label{e:Lpde}
   x p F^{(p+1)} + \alpha p F^{(p)} - x F' + n F = 0,
\end{equation}
which follows from a computation similar to that in \cite[eq.~(59)]{beauduin2024}.

The Laguerre polynomials also provide a convenient example for real exponents. Using the expression for $f^s(t)$ from \cref{e:f^s}, we obtain $\Psi_p^{[s]} = D/\sqrt[p]{1-spD^p}$, and a derivation similar to \cref{e:lagalpha} gives
\begin{equation}
(1-spD^p)^{\frac\alpha{p}} L^s_{p, n}(x) = \sum_{k=0}^{\floor{n/p}} \binom{\frac{n+\alpha}{p} - 1}{k} \frac{n! (-sp)^k}{(n-pk)!} x^{n-pk}.
\end{equation}
For $p=\alpha=1$, this formula reduces to the bivariate Laguerre polynomials of Dattoli and Torre \cite{dattoli1998}, with $s$ as the second variable.

\printbibliography

\end{document}